\newtheorem{theorem}{Theorem}
\newtheorem{lemma}[theorem]{Lemma}
\newtheorem*{proposition}{Proposition}
\begin{document}

		\begin{abstract}
		Let $\mathbb{T}$ be a homogeneous tree. We prove that if $f\in L^{p}(\mathbb{T})$, $1\leq p\leq 2$, then the Riesz means $S_{R}^{z}\left(
		f\right) $ converge to $f$ 
		everywhere as $R\rightarrow \infty $, whenever $\operatorname{Re}z>0 $. 
		\end{abstract}
		
		\title[Riesz means ]{Riesz means on homogeneous trees}
		\thanks{Supported by the Hellenic Foundation for Research and Innovation, Project HFRI-FM17-1733.}
		\author{Effie Papageorgiou}
		\curraddr{Department of Mathematics, Aristotle University of Thessaloniki,
			Thessaloniki 54.124, Greece }
		\subjclass[2010]{43A90, 22E30}
		\keywords{Homogeneous trees, Riesz means}
		\maketitle
		
		A homogeneous tree $\mathbb{T}$ of degree $Q+1$, $Q\geq 2$, is an infinite connected graph with no loops, in which every vertex is adjacent to $Q+1$ other vertices. We shall identify $\mathbb{T}$ with its set of vertices $\mathcal{V}$. $\mathbb{T}$ carries a natural distance $d$ and a natural measure $\mu$. Specifically, $d(x,y)$ is the number of edges of the shortest path joining $x$ to $y$ and $\mu$ is the counting measure. For the counting measure, the volume of any sphere $S(x,n)$ in $\mathbb{T}$ is given by 
			\[
			|S(x,n)|=\begin{cases}
			1, & \text{if } n=0\\
			(Q+1)Q^{n-1}, & \text{if } n\in \mathbb{N},
			\end{cases}
			\]
			and the Lebesgue spaces, associated with  $\mu$, have norms defined by 
			$L^p(\mathbb{T})$ 
			\begin{equation}\label{Lp}
			\|f\|_{p}=
			\begin{cases}
			\Big( \int_{\mathbb{T}}|f(x)|^p d\mu \Big)^{1/p}=\Big( \sum\limits_{x\in\mathbb{T}}|f(x)|^p \Big)^{1/p}, & \text{if } 1 \leq p <\infty \\
			\sup\limits_{x\in \mathbb{T}} |f(x)|, & \text{if } p=\infty.%
			\end{cases}
			\end{equation} 
			
			Let us fix a base point $x_0$ and set $|x|=d(x,x_0)$. Functions depending only on $|x|$ are called radial.  If $E(\mathbb{T})$ is a function space on 
			$\mathbb{T}$, we will denote by $E(\mathbb{T})^{\#}$ the subspace of radial elements in $E(\mathbb{T})$.

			Let $G$ be the group of isometries of $\mathbb{T}$, and suppose that $G$ acts transitively on $\mathbb{T}$.  If $x_0\in \mathbb{T}$ is a
			fixed vertex, then the orbit $Gx_0$ is all of $\mathbb{T}$. Therefore $\mathbb{T}$ may be
			identified through the map $g\mapsto gx_0$ with the quotient $G/K$, where
			$K=\{g \in G: gx_0=x_0\}$, \cite[p. 46]{Figa}. This means that every function on $\mathbb{T}$ may be
			lifted to a function on $G$, by defining $\tilde{f}(g)=f(gx_0)$. The function
			$\tilde{f}$ has the property that $\tilde{f}(gk)=\tilde{f}(g)$, for every $k\in K$, and
			conversely a $K$-right-invariant function on $G$ may be identified
			with a function on $\mathbb{T}$. Thus, we shall identify
			functions defined on $\mathbb{T}$
			with right-$K$-invariant functions on $G$ and radial functions with $K$-bi-invariant functions on $G$.	
			
			Let $G$ be a locally compact group and $K$ be a compact subgroup of $G$; then the pair $(G,K)$ is called a Gelfand pair if the space $C_c(K\backslash G/K)$ of complex continuous $K$-bi-invariant functions with compact support is a commutative algebra with the convolution product (the space $C_c(K\backslash G/K)$ is always an algebra with the convolution product). Under the hypothesis that  $K$ acts transitively on the boundary of $\mathbb{T}$, $(G,K)$ is a Gelfand pair. Also, the transitive action of $K$ on the boundary of $\mathbb{T}$ is
			also a necessary condition for $(G,K)$ to be a Gelfand pair, \cite[p. 47]{Figa}. In fact, the following holds true, \cite{Tits}.
			
	\begin{proposition}
				For every finite subset $\mathcal{F}$ of  $\mathcal{V}$, denote by $\text{Aut}_{\mathcal{F}}(\mathbb{T})$ the group of automorphisms $g$ of $\mathbb{T}$ such that $g(x) = x$ for all $x \in \mathcal{F}$. Then $G$ is equipped with a topology of locally compact totally discontinuous group such that the subgroups $\text{Aut}_{\mathcal{F}}(\mathbb{T})$ form a fundamental system of neighborhoods of the identity in $G$. Moreover a subgroup $H$ is maximal, open, compact in $G$ if and only if $H$ is the stabiliser of a point $x$ in $\mathcal{V}$.
	\end{proposition}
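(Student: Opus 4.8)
The plan is to equip $G$ with the topology of pointwise convergence on the vertex set $\mathcal{V}$ (the \emph{permutation topology}), taking the subgroups $\text{Aut}_{\mathcal{F}}(\mathbb{T})$, with $\mathcal{F}$ ranging over finite subsets of $\mathcal{V}$, as a fundamental system of neighborhoods of the identity. First I would check that this family really does define a group topology. Each $\text{Aut}_{\mathcal{F}}(\mathbb{T})$ is a subgroup, the family is stable under finite intersections since $\text{Aut}_{\mathcal{F}_1}(\mathbb{T})\cap \text{Aut}_{\mathcal{F}_2}(\mathbb{T})=\text{Aut}_{\mathcal{F}_1\cup \mathcal{F}_2}(\mathbb{T})$, and it is conjugation-covariant because $g\,\text{Aut}_{\mathcal{F}}(\mathbb{T})\,g^{-1}=\text{Aut}_{g\mathcal{F}}(\mathbb{T})$; these are exactly the conditions guaranteeing that the cosets of the $\text{Aut}_{\mathcal{F}}(\mathbb{T})$ form the open sets of a topological-group topology. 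Since each $\text{Aut}_{\mathcal{F}}(\mathbb{T})$ is an open subgroup it is automatically closed (its complement is a union of cosets), so the identity has a neighborhood basis of clopen sets; as $\bigcap_{\mathcal{F}}\text{Aut}_{\mathcal{F}}(\mathbb{T})=\{e\}$ (an automorphism fixing every vertex is the identity), the connected component of $e$ is trivial and $G$ is totally disconnected.

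For local compactness it suffices to exhibit one compact neighborhood of the identity, and the natural candidate is the stabilizer $K=\text{Aut}_{\{x_0\}}(\mathbb{T})$ of the base point, which is open by construction. The key observation is that every $g\in K$ preserves distances from $x_0$, hence permutes each finite sphere $S(x_0,n)$. This gives an injective, continuous homomorphism $K\hookrightarrow \prod_{n\geq 0}\operatorname{Sym}(S(x_0,n))$ into a product of finite symmetric groups, which is compact by Tychonoff's theorem. I would then show that the image is closed: a tuple of permutations comes from a genuine automorphism precisely when it respects the adjacency relations between consecutive spheres, and each such constraint involves only finitely many coordinates, so the set of admissible tuples is an intersection of closed sets. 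Because the permutation topology on $K$ coincides with the subspace topology inherited through this embedding, $K$ is homeomorphic to a closed subset of a compact space and is therefore compact; consequently $G$ is locally compact.

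It remains to characterize the maximal open compact subgroups, which is where the main work lies. Every vertex stabilizer $K_x=\text{Aut}_{\{x\}}(\mathbb{T})$ is open and, being conjugate to $K$, compact. Conversely, any compact subgroup $H$ has finite orbits: for $v\in\mathcal{V}$ the orbit map $h\mapsto hv$ is continuous with image a compact, hence finite, subset of the discrete set $\mathcal{V}$. A group acting on a tree with a bounded orbit fixes a point of the geometric realization --- a vertex or the midpoint of an edge --- obtained as the combinatorial center of the finite orbit; this center argument is the crux. When the fixed point is a vertex $x$ one gets $H\subseteq K_x$, and maximality forces $H=K_x$. To close the equivalence I would check that each $K_x$ is genuinely maximal: invoking the transitivity of $K$ on every sphere $S(x,n)$ (equivalently on the boundary, as assumed above), one shows $K_x\subseteq K_y \Rightarrow x=y$ and that $K_x$ fixes no edge, so $K_x$ is contained in no strictly larger compact subgroup. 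The subtle points throughout are precisely this center-of-a-finite-set construction and the careful use of the transitivity hypotheses that single out the vertex stabilizers among all compact subgroups.
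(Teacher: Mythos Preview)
The paper does not actually prove this proposition: it is quoted from Tits and simply attributed to \cite{Tits}, with no argument given. So there is nothing in the paper to compare your sketch against, and your outline of the permutation topology, the Tychonoff embedding $K\hookrightarrow\prod_n\operatorname{Sym}(S(x_0,n))$, and the bounded-orbit/center argument is the standard route and is essentially correct for the first two assertions.

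There is, however, a real gap in your handling of the maximality clause. You note that a compact subgroup $H$ fixes a vertex or the midpoint of an edge, and then write ``when the fixed point is a vertex $x$ one gets $H\subseteq K_x$''; but you never dispose of the other case. If $H$ contains an inversion of an edge $e=\{x,y\}$ (an automorphism swapping $x$ and $y$), then $H$ fixes only the midpoint of $e$ and is contained in \emph{no} vertex stabilizer. In the full isometry group of a homogeneous tree such inversions exist, and the setwise stabilizer $G_e$ is then an open compact subgroup not dominated by any $K_x$; the same fixed-point argument you use for $K_x$ shows $G_e$ is itself maximal compact. So either one must read the hypothesis on $G$ as excluding inversions (type-preserving automorphisms), in which case your proof goes through once you say so, or the ``only if'' direction as stated is false and the edge stabilizers must be added to the list. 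This is precisely the ``subtle point'' you flagged, and it cannot be finessed: you need either an explicit no-inversion hypothesis or a different statement.
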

					
		We normalize the Haar measure on $G$ in such a way that $K$ has a unit mass. Then
		\[\
		\sum\limits_{x\in \mathbb{T}}f(x)=\int_{G}f(g)dg, \; f\in L^1(\mathbb{T}).
		\]
		This allows us to define the convolution of two functions on $\mathbb{T}$ by
		\begin{equation} \label{11}
		(f_1\ast f_2)(g)=\int_{G}f_1(h)f_2(h^{-1}g)dh, \; g\in G. 
		\end{equation}
		
If $f_2$ is radial, then (\ref{11}) rewrites 
\begin{equation} \label{convrad}
(f_1\ast f_2)(x)=\sum\limits_{n\geq 0}f_2(n)\sum\limits_{y\in S(x,n)}f_1(y), \; x,y\in \mathbb{T}. 
\end{equation}

		In this short note, we deal with the Riesz means $S_R^z$,   $\operatorname{Re}z>0$, $R>0$, on $\mathbb{T}$, which are defined as convolution operators
		\[\
		S_R^zf=f\ast\kappa_{R}^z,
		\]
		where $\kappa_{R}^z$ is the inverse spherical transform of the multiplier
		\begin{equation}\label{mult}
		m_{R }^z(\lambda )=\left( 1-\frac{1-\gamma(\lambda)}{R} \right)^z_+  ,
		\end{equation}
		(see Section 1 for more details). 	We prove the following result.
		\begin{theorem}
			\label{Thm} Let $1\leq p\leq 2$. If  $\operatorname{Re}z>0$, then for $f\in L^{p}(\mathbb{T})$, 
			\begin{equation}
			\lim\limits_{R\rightarrow +\infty }S_{R}^{z}f(x)=f(x),\;\text{everywhere.}
			\end{equation}
		\end{theorem}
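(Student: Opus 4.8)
The plan is to trivialize the problem by first reducing it to the single exponent $p=2$ and then replacing the pointwise statement by a much stronger operator-norm statement. The reduction rests on the fact that, for the counting measure $\mu$, one has the elementary inclusion $L^{p}(\mathbb{T})\subseteq L^{2}(\mathbb{T})$ with $\|f\|_{2}\le\|f\|_{p}$ whenever $1\le p\le 2$: normalizing $\|f\|_{p}=1$ forces $|f(x)|\le 1$ for every vertex $x$, hence $|f(x)|^{2}\le|f(x)|^{p}$ and $\sum_{x}|f(x)|^{2}\le\sum_{x}|f(x)|^{p}$. Thus it suffices to establish the everywhere convergence for $f\in L^{2}(\mathbb{T})$, and I would spend no further effort on $p<2$.

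The second step is to read $S_{R}^{z}$ through the functional calculus of a single self-adjoint operator. Let $\mathcal{A}$ be the nearest-neighbour averaging operator, i.e. convolution by the normalized radial indicator of $S(x_{0},1)$; by (\ref{convrad}) it is convolution by a radial kernel and is self-adjoint on $L^{2}(\mathbb{T})$, with spherical transform exactly $\gamma(\lambda)$ (so that $\mathcal{A}\phi_{\lambda}=\gamma(\lambda)\phi_{\lambda}$). Its $L^{2}$ spectrum is contained in the compact interval $[-b,b]$ with $b=\tfrac{2\sqrt{Q}}{Q+1}$, and the decisive arithmetic fact is the \emph{spectral gap} $b<1$, valid for every $Q\ge 2$ since $Q+1>2\sqrt{Q}$. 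Writing $g_{R}(s)=\bigl(1-\tfrac{1-s}{R}\bigr)_{+}^{z}$ we have $m_{R}^{z}(\lambda)=g_{R}(\gamma(\lambda))$, and because the spherical transform intertwines convolution by a radial kernel with multiplication by its transform, the convolution operator $S_{R}^{z}f=f\ast\kappa_{R}^{z}$ is precisely $g_{R}(\mathcal{A})$ in the Borel functional calculus of $\mathcal{A}$ (the condition $\operatorname{Re}z>0$ guaranteeing $\|g_{R}\|_{\infty}\le 1$, hence $\|S_{R}^{z}\|_{L^{2}\to L^{2}}\le 1$).

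Now the truncation becomes harmless. For $s\in[-b,b]$ we have $1-s\in[1-b,1+b]$, a set bounded away from $0$ and from above, so once $R>1+b$ the cutoff $(\cdot)_{+}$ is inactive on the spectrum and $g_{R}(s)=\exp\!\bigl(z\log(1-\tfrac{1-s}{R})\bigr)$. Since $\log(1-\tfrac{1-s}{R})\to 0$ uniformly on $[-b,b]$ as $R\to\infty$ and $z$ is fixed, we get $\sup_{s\in[-b,b]}|g_{R}(s)-1|\to 0$, whence by the spectral theorem
\[
\|S_{R}^{z}-I\|_{L^{2}\to L^{2}}\ \le\ \sup_{s\in[-b,b]}\bigl|g_{R}(s)-1\bigr|\ \longrightarrow\ 0 .
\]
Finally I would upgrade this to the pointwise conclusion using that evaluation is $L^{2}$-continuous on a discrete space: $|h(x)|^{2}\le\sum_{y}|h(y)|^{2}$ gives $|h(x)|\le\|h\|_{2}$, so for $f\in L^{2}(\mathbb{T})$,
\[
\sup_{x\in\mathbb{T}}\bigl|S_{R}^{z}f(x)-f(x)\bigr|\ \le\ \|S_{R}^{z}f-f\|_{2}\ \le\ \|S_{R}^{z}-I\|_{L^{2}\to L^{2}}\,\|f\|_{2}\ \longrightarrow\ 0 ,
\]
which yields convergence at every vertex (in fact uniformly, so the null-set subtlety never arises). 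Together with the reduction of the first step this proves the theorem.

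The only non-formal input, and hence the main obstacle, sits entirely in the second step: identifying the convolution operator $S_{R}^{z}$ with the spectral multiplier $g_{R}(\mathcal{A})$ via the Plancherel/inversion theory of the spherical transform, and recording that $\sigma(\mathcal{A})\subseteq[-b,b]$ with $b<1$. It is precisely this boundedness of the spectrum, together with the gap at the top, that makes the Riesz truncation trivial for large $R$ and produces uniform convergence for \emph{every} fixed $z$ with $\operatorname{Re}z>0$. This is in sharp contrast with the Euclidean Bochner–Riesz problem, where the spectrum of the Laplacian is unbounded and the high-frequency truncation never becomes inactive; there the analogous statement is genuinely delicate, whereas on $\mathbb{T}$ the geometry collapses the difficulty.
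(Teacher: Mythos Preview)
Your argument is correct and considerably shorter than the paper's. The paper follows the classical template for almost-everywhere convergence results: it first proves that the maximal operator $S_*^z f=\sup_{R>0}|S_R^z f|$ is bounded from $L^p$ into some $L^r$ (via an $L^2$ maximal estimate \`a la Stein together with a pointwise kernel bound $|\kappa_R^z(n)|\le cQ^{-n/2}$ uniform in $R$), then verifies convergence on a dense class built from the heat semigroup, and finally combines the two through the standard measure-theoretic principle. You bypass all of this by exploiting three features that are peculiar to the discrete, homogeneous setting: the inclusion $L^p(\mathbb T)\subset L^2(\mathbb T)$ for $p\le 2$ under the counting measure, the continuity of point evaluation $|h(x)|\le\|h\|_2$, and the compactness of the $L^2$-spectrum of the Laplacian, which makes the Riesz cutoff inactive for large $R$ and gives $\|S_R^z-I\|_{2\to 2}\to 0$ outright. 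Your route even yields uniform convergence on $\mathbb T$, which is stronger than the stated conclusion.

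What the paper's approach buys is a maximal inequality of independent interest, and a proof scheme that transfers to continuous symmetric spaces where your shortcuts break down (there $L^p\not\subset L^2$, evaluation is not $L^2$-continuous, and in the Euclidean case the spectrum is unbounded). Two cosmetic remarks: your symbol $\mathcal{A}$ collides with the paper's Abel transform, so write $\mathcal{M}$ instead; and the phrase ``spectral gap $b<1$'' slightly overstates what you use---the essential point is that $\sigma(\mathcal{M})$ is compact, so $g_R\to 1$ uniformly on it, while $b<1$ only serves to keep $1-(1-s)/R$ in $(0,1)$ and hence $|g_R|\le 1$.
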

		Let us recall that the Riesz means were first treated by E.M.Stein in \cite{STEIN27}, where he proved that if $f\in L^{p}\left( \left[ 0,1\right] ^{n}\right) $, $n\geq 1$, $p\in \left( 1,2%
		\right] $, then
		\begin{equation}
		\left\Vert S_{R}^{z}\left( f\right) -f\right\Vert _{p}\longrightarrow 0,%
		\text{ as }R\longrightarrow \infty ,  \label{st}
		\end{equation}%
		whenever $\operatorname{Re}z>\left( \frac{n-1}{2}\right) \left( \frac{2}{p}-1\right) $. Since then, many authors have studied Riesz means in various geometric contexts, as euclidean spaces, 
		compact manifolds, Lie groups of
		polynomial volume growth, graphs and discrete groups of polynomial volume growth, Riemannian manifolds of nonnegative curvature, compact semisimple Lie
		groups and noncompact symmetric spaces. See \cite{ALEXOLO, CHRIST7, ChristSogge,  RieszFMP, RieszFG, GIUMA, MAR, SEEGER, Sogge, STEIN27,STEIN29}. 
		
		Note that the multiplier $m_{R}^{z}(\lambda )$ does not extend
		holomorphically to any strip containing the real line. So, by \cite[%
		Theorem 1.2]{COWMESE}, the Riesz means operator is not bounded on $L^{p}(\mathbb{T})$
		if $p\neq 2$ and consequently the norm summability problem on $L^{p}(\mathbb{T})$, $%
		p\neq 2$, is ill posed. Note also that in the case of homogeneous trees, there is no restriction on the size of $\operatorname{Re}z>0$, contrarily to the Euclidean case and even to the hyperbolic space case, due to absence of local obstructions. 
		
				\section{Preliminaries}
					In this section we present the tools we need for the proof of our results. For details, see for example \cite{COWMESE, EDDI}. Let $\mathcal{M}$ be the mean operator
					\begin{equation*}
					(\mathcal{M}f)(x)=\frac{1}{Q+1}\sum\limits_{y\in \mathbb{T},\; d(x,y)=1}f(y).
					\end{equation*} Then, the Laplacian $\mathcal{L}$ on $\mathbb{T}$ is defined by
				\[\
				\mathcal{L}=I-\mathcal{M}.
				\]
				The \textit{spherical function} $\varphi_\lambda$ of index $\lambda \in \mathbb{C}$ is the unique radial eigenfunction of the operator $\mathcal{L}$, which is associated with the eigenvalue 
				\begin{equation}\label{gammalambda0}
				\gamma(\lambda)=\frac{Q^{i\lambda}+Q^{-i\lambda}}{Q^{\frac{1}{2}}+Q^{-\frac{1}{2}}}=\frac{2}{Q^{\frac{1}{2}}+Q^{-\frac{1}{2}}}\cos(\lambda \log Q),
				\end{equation} and which is normalized by $\varphi_\lambda(0)=1$. Set $\tau=\frac{2\pi}{\log Q}$. Then,
				\begin{equation}\label{gammalambda}
				\gamma(\lambda)=\gamma(0)\cos(\frac{2\pi}{\tau}\lambda).
				\end{equation}
				Note that $\varphi_\lambda$ is periodic with period $\tau$.
		
				The \textit{spherical Fourier transform} of a radial function $f$ on $\mathbb{T}$ is defined by
				\[\
				(\mathcal{H}f)(\lambda)=\sum\limits_{x\in \mathbb{T}}f(x)\varphi_\lambda(x)=f(0)+\sum\limits_{n\geq 1}(1+Q)Q^{n-1}f(n)\varphi_\lambda(n), \quad  \lambda \in \mathbb{C}.
				\]
		The following inversion formula holds:
				\begin{equation} \label{inversion}
				(\mathcal{H}^{-1}f)(x)=\int_{-\tau /2}^{\tau /2}f(\lambda)\varphi_{\lambda}(x)\frac{d \lambda}{|\textbf{c}(\lambda)|^2}, \quad  x\in \mathbb{T},
				\end{equation}
				where $\textbf{c}$ is the meromorphic function 
				\begin{equation} \label{cfunc}
				\textbf{c}(z)=\frac{1}{Q^{1/2}+Q^{-1/2}} \frac{Q^{1/2+iz}-Q^{-1/2-iz}}{Q^{iz}-Q^{-iz}}, \quad  z \in \mathbb{C} \backslash (\frac{\tau}{2})\mathbb{Z}.	
				\end{equation}	
		
				We have the following Plancherel theorem \cite{COWMESE}: the spherical Fourier transform extends to an isometry of $L^2(\mathbb{T})^{\#}$ onto $L^2([-\tau/2,\tau/2], \frac{d \lambda}{|\textbf{c}(\lambda)|^2})$ and
\begin{equation}\label{plancherel}
\|f\|_{2}=\left( \int_{-\tau /2}^{\tau /2}|(\mathcal{H}f)(\lambda)|^2 \frac{d \lambda}{|\textbf{c}(\lambda)|^2} \right)^{1/2}.
\end{equation}

Note also that the spherical Fourier transform is written as a composition 
\[\
\mathcal{H}=\mathcal{F}\circ \mathcal{A}
\]
of the euclidean Fourier transform $\mathcal{F}$ and the \textit{Abel transform} $\mathcal{A}$ \cite{EDDI}. Recall that the kernel $\kappa_{R}^z$ of the Riesz means operator is given by the inverse spherical transform of the multiplier $m_R^z$:
\[\
\kappa_{R}^z=(\mathcal{F}\circ \mathcal{A})^{-1}(m_R^z)=(\mathcal{A}^{-1}\circ \mathcal{F}^{-1})(m_R^z).
\] For that, we shall make use of the following inversion formulas, \cite{EDDI}:
\begin{equation}\label{invFour}
(\mathcal{F}^{-1}f)(n)=\frac{1}{\tau}\int_{-\tau /2}^{\tau /2}f(\lambda)Q^{-i\lambda n}d\lambda=\frac{1}{\pi}\int_{0}^{\pi}f(\frac{\tau}{2\pi}\lambda)\cos (\lambda n)d\lambda,
\end{equation}
and 
\begin{equation}\label{invAbel}
(\mathcal{A}^{-1}f)(n)=\sum_{k=0}^{\infty}Q^{-\frac{n}{2}-k}\{ f(n+2k)-f(n+2k+2) \}. 
\end{equation}

\section{Estimates of the kernel $\kappa_R^z$}
\begin{lemma}\label{kernelest} If $\operatorname{Re}z>0$, then
	\[\
	|\kappa_R^z(n)|\leq c Q^{-n/2}.
	\]
\end{lemma}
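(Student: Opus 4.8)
The plan is to exploit the factorization $\kappa_R^z=(\mathcal{A}^{-1}\circ\mathcal{F}^{-1})(m_R^z)$ recorded before the lemma and to carry out the two inversions in succession. The decisive structural feature is that the Abel inversion formula (\ref{invAbel}) already produces the decay factor $Q^{-n/2}$ explicitly in front of each term; consequently, once I know that the intermediate function $g:=\mathcal{F}^{-1}(m_R^z)$ is \emph{bounded on $\mathbb{T}$ uniformly in $R$}, the whole estimate will follow by summing a geometric series. So the real work is the $L^\infty$ control of $g$, not the extraction of the exponential decay.

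The first step is therefore to bound $\|g\|_\infty$. I would begin with the pointwise estimate $|m_R^z(\lambda)|\le 1$, valid for every real $\lambda$ and every $R>0$. Indeed, for real $\lambda$ the eigenvalue $\gamma(\lambda)=\gamma(0)\cos(\tfrac{2\pi}{\tau}\lambda)$ ranges in $[-\gamma(0),\gamma(0)]$ with $\gamma(0)=\tfrac{2}{Q^{1/2}+Q^{-1/2}}\in(0,1)$, so the numerator $1-\gamma(\lambda)$ is bounded below by $1-\gamma(0)>0$. Hence $\tfrac{1-\gamma(\lambda)}{R}>0$ and $1-\tfrac{1-\gamma(\lambda)}{R}<1$, and after taking the positive part the base of the power in (\ref{mult}) lies in $[0,1)$. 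Writing the complex power as $t^z=e^{z\log t}$ for $t\in(0,1)$ gives $|t^z|=t^{\operatorname{Re}z}\le 1$ since $\operatorname{Re}z>0$ (and the value is $0$ when $t=0$). Feeding $|m_R^z|\le 1$ into the Fourier inversion (\ref{invFour}) yields, for every $n$,
\[
|g(n)|=\Big|\tfrac{1}{\pi}\int_0^\pi m_R^z\big(\tfrac{\tau}{2\pi}\lambda\big)\cos(\lambda n)\,d\lambda\Big|\le \tfrac{1}{\pi}\int_0^\pi 1\cdot 1\,d\lambda=1,
\]
so $\|g\|_\infty\le 1$ uniformly in $R$ (and even in $z$).

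The second step is to insert this into the Abel inversion (\ref{invAbel}). Taking absolute values termwise and using the crude bound $|g(m)|\le\|g\|_\infty$ gives
\[
|\kappa_R^z(n)|\le Q^{-n/2}\sum_{k=0}^{\infty}Q^{-k}\bigl(|g(n+2k)|+|g(n+2k+2)|\bigr)\le \frac{2Q}{Q-1}\,\|g\|_\infty\,Q^{-n/2},
\]
where the geometric series $\sum_{k\ge0}Q^{-k}=\tfrac{Q}{Q-1}$ converges because $Q\ge 2$. Combined with $\|g\|_\infty\le 1$ this yields the claim with the explicit admissible constant $c=\tfrac{2Q}{Q-1}$, depending only on $Q$ and in particular independent of both $n$ and $R$ (the latter uniformity being what one ultimately needs to justify passing to the limit $R\to\infty$ in Theorem \ref{Thm}).

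I do not expect a genuine obstacle here: once the factorization is used, the argument is short. The one point that must be checked with some care — and which I would regard as the heart of the lemma — is the uniform modulus bound $|m_R^z(\lambda)|\le 1$. It is precisely the positive-part truncation $(\,\cdot\,)_+$ in (\ref{mult}) that confines the base to $[0,1)$ and thereby removes any branch-cut ambiguity in the complex power, so that $\operatorname{Re}z>0$ suffices and no smoothness or cancellation in the telescoped differences $g(n+2k)-g(n+2k+2)$ is needed.
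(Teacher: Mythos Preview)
Your proof is correct and follows essentially the same route as the paper: both invoke the factorization $\kappa_R^z=(\mathcal{A}^{-1}\circ\mathcal{F}^{-1})(m_R^z)$, use the uniform bound $|m_R^z(\lambda)|\le 1$ to control the intermediate function, and then sum the geometric series coming from the Abel inversion to obtain the $Q^{-n/2}$ decay. The only cosmetic difference is that the paper first converts the telescoped differences $g(n+2k)-g(n+2k+2)$ into a single integral with kernel $\sin\lambda\,\sin(\lambda(n+2k+1))$ before bounding, whereas you apply the triangle inequality to the two terms separately; neither choice affects the outcome.
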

\begin{proof}
We have that $\kappa_R^z=\mathcal{H}^{-1}(m_R^z)=(\mathcal{A}^{-1}\circ\mathcal{F}^{-1})(m_R^z)$. Then, using (\ref{invAbel}), (\ref{invFour}) and (\ref{mult}), we obtain the following explicit expression of $\kappa_{R}^z$:
\begin{align}\label{kernel1}
\kappa_R^z(n)&=\frac{1}{\pi}\sum_{k=0}^{\infty}Q^{-\frac{n}{2}-k}\int_{0}^{\pi}\left( 1-\frac{1-\gamma(\frac{\tau}{2\pi}\lambda)}{R} \right)_+^z\sin\lambda \sin(\lambda (n+2k+1))d\lambda\notag\\
&=\frac{1}{\pi}Q^{-\frac{n}{2}}\sum_{k=0}^{\infty}Q^{-k}\int_{0}^{\pi}\sin\lambda \sin(\lambda (n+2k+1))d\lambda\notag\\
&\leq cQ^{-\frac{n}{2}}\sum_{k=0}^{\infty}Q^{-k} \leq cQ^{-\frac{n}{2}}\notag.
\end{align}

\end{proof}
\textbf{Remark.}
	 Note that the same kernel estimate holds for every operator $m_R(\mathcal{L})$ with a uniformly bounded multiplier $|m_R(\lambda)|\leq c$.

\section{Proof of Theorem 1}
For the proof of Theorem 1, we need to introduce the maximal function associated with Riesz means:
\begin{equation}  \label{maximal}
S_*^z(f)(x)=\sup_{R>0 } |S^z_R(f)(x)|, \; f\in L^p(\mathbb{T}), \; 1\leq
p\leq 2.
\end{equation}
The proof will be given in steps.

\textit{Step 1: The maximal operator $S_*^z(f)$ is bounded from $L^p(\mathbb{T})$ to $L^r(\mathbb{T})$,  $1\leq p \leq 2$, and $r\geq
pq/(2-p+pq-q)$, for every $q>2$. }

\medskip
 First, proceeding as in \cite{GIUMA}, we have that
\begin{lemma}
	\label{S*L2}  If $f\in L^2(\mathbb{T})$ and $z\in \mathbb{C}$ with $\operatorname{Re}z>0$,
	then $\|S_*^zf\|_2\leq c(z)\|f\|_2$.
	\end{lemma}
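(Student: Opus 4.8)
The plan is to dominate the maximal function by a one-dimensional Sobolev norm of $R\mapsto S_R^zf(x)$ in the dilation parameter and then, via the functional calculus of $\mathcal L$, to reduce everything to a uniform estimate for an explicit one-variable integral of the multiplier. The first point to exploit is that $\mathbb T$ is non-amenable: since $\gamma(0)=2/(Q^{1/2}+Q^{-1/2})<1$, the $L^2$-spectrum of $\mathcal L=I-\mathcal M$ is the compact interval $[a,b]$ with $a=1-\gamma(0)>0$ and $b=1+\gamma(0)$, and $S_R^z=m_R^z(\mathcal L)$ in the sense of the spectral calculus, where on the spectral variable $m_R^z(s)=(1-s/R)_+^z$. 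Consequently $m_R^z(s)=0$ for every $s\in[a,b]$ as soon as $R\le a$, so $S_R^zf=0$ there and the supremum in \eqref{maximal} is effectively taken over $R\in[a,\infty)$. Writing $G_R=S_R^zf-f=(m_R^z(\mathcal L)-I)f$, we have $G_a=-f$ and $G_R\to 0$ as $R\to\infty$, and since $|S_R^zf|\le|f|+\sup_{R\ge a}|G_R|$ it suffices to bound $\big\||f|+\sup_{R\ge a}|G_R|\big\|_2$.

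First I would record the maximal inequality coming from the one-dimensional Sobolev embedding $H^\beta(\mathbb R)\hookrightarrow L^\infty(\mathbb R)$, valid for any $\beta>1/2$: after the change of variable $t=\log R$ this gives
\[
\sup_{R\ge a}|G_R(x)|\le C_\beta\Big(\int_{\log a}^\infty |G_{e^t}(x)|^2\,dt+\int_{\log a}^\infty\big|(-\partial_t^2)^{\beta/2}G_{e^t}(x)\big|^2\,dt\Big)^{1/2}.
\]
Squaring, integrating over $\mathbb T$, interchanging the order of integration by Tonelli, and invoking the Plancherel formula \eqref{plancherel} in its spectral form (the resolution of the identity $dE_s$ for $\mathcal L$) for each fixed $t$, the problem reduces to showing that the one-variable quantity
\[
\Phi_\beta(s)=\int_{\log a}^\infty\big|m_{e^t}^z(s)-1\big|^2\,dt+\int_{\log a}^\infty\big|(-\partial_t^2)^{\beta/2}\big(m_{e^t}^z(s)-1\big)\big|^2\,dt
\]
is bounded uniformly for $s\in[a,b]$; indeed then $\big\||f|+\sup_R|G_R|\big\|_2\le(1+C_\beta\sup_s\Phi_\beta(s)^{1/2})\|f\|_2$. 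The low-order term is harmless because $m_{e^t}^z(s)-1=\mathcal O(e^{-t})$ as $t\to+\infty$.

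The crux is the fractional term, and this is where the whole range $\operatorname{Re}z>0$ is won. Near the edge $R=s$ the multiplier $m_R^z(s)$ is only Holder of order $\operatorname{Re}z$, so the naive first-derivative square function $\int_a^\infty|\partial_R S_R^zf|^2 R\,dR$ diverges once $\operatorname{Re}z\le 1/2$ (a direct substitution turns its spectral symbol into $|z|^2 B(2,2\operatorname{Re}z-1)$, finite only for $\operatorname{Re}z>1/2$). The remedy is to use exactly the smoothness that is available: I would choose the Sobolev order $\beta$ in the open interval $(1/2,\operatorname{Re}z+1/2)$, which is nonempty precisely because $\operatorname{Re}z>0$. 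A scaling argument in $\Phi_\beta$ (set $u=s/R$) shows that the edge contribution is controlled by integrals of the form $\int_0^1(1-u)^{2\operatorname{Re}z-2\beta}\,du$, convergent exactly when $2\operatorname{Re}z-2\beta>-1$, i.e. $\beta<\operatorname{Re}z+1/2$, while $\beta>1/2$ secures the embedding; the resulting bound is uniform in $s\in[a,b]$ but degenerates as $\operatorname{Re}z\downarrow0$, which accounts for the constant $c(z)$. Equivalently, one may linearise the supremum and run Stein's analytic interpolation on the family $z\mapsto S_R^z$ between a line $\operatorname{Re}z>1/2$, where the first-derivative square function already works, and the target range; the fractional-derivative formulation above is simply the concrete version of that interpolation. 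The main obstacle, then, is entirely the borderline smallness of $\operatorname{Re}z$, and it is resolved by matching the Sobolev order to the Holder regularity $\operatorname{Re}z$ of $m_R^z$ at its edge.
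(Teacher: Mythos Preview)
Your strategy is sound and does yield the lemma, but it is not the route the paper takes. The paper follows Giulini--Mauceri: it first peels off the heat maximal function $H_*f=\sup_{R>0}|e^{-R\mathcal L}f|$, which is $L^2$-bounded by Stein's maximal theorem for contraction semigroups, and then writes the remainder via the Mellin transform as
\[
(S_R^z-H_R)f=\int_{\mathbb R}c(z,s)\,R^{-is}\,\mathcal L^{is}f\,ds,\qquad |c(z,s)|\le c(z)(1+|s|)^{-(\operatorname{Re}z+1)}.
\]
Taking the supremum under the integral and using $\|\mathcal L^{is}\|_{2\to2}\le1$ finishes the proof in one line. Your Sobolev-embedding argument is, at bottom, the same mechanism viewed dually: the Mellin transform of $R\mapsto m_R^z(\sigma)$ is exactly the Fourier transform of $t\mapsto m_{e^t}^z(\sigma)$, and the decay $(1+|s|)^{-\operatorname{Re}z-1}$ that makes the paper's integral absolutely convergent is precisely the Fourier decay that places the multiplier in $H^\beta$ for every $\beta<\operatorname{Re}z+\tfrac12$. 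The paper's formulation is cleaner because subtracting $e^{-\sigma/R}$ (rather than the constant $1$) produces a function of $u=\sigma/R$ with an honest Mellin transform on the whole imaginary axis, so no half-line fractional calculus is needed; your version has the merit of displaying transparently that the threshold $\operatorname{Re}z>0$ is exactly the condition allowing a Sobolev order $\beta>\tfrac12$ below the H\"older exponent of $(1-u)_+^z$ at $u=1$.

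One technical point in your sketch deserves care: the function $t\mapsto m_{e^t}^z(\sigma)-1$ equals $-1$ on $(-\infty,\log\sigma]$, hence is not in $L^2(\mathbb R)$, so the expression $(-\partial_t^2)^{\beta/2}G_{e^t}(x)$ and the subsequent Plancherel step are not literally defined as written. You need either to work in a homogeneous Sobolev space, or to subtract a smooth step (as the paper in effect does with the heat kernel) before invoking the embedding; this is routine to arrange, but it is a genuine gap in the write-up as it stands.
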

\begin{proof}
	We include the proof for the sake of completeness.	Let $h_R$, $R > 0$, be the heat kernel on $\mathbb{T}$, i.e. let the heat semigroup be $H_Rf:=e^{-R\mathcal{L} }f=f\ast h_R$, $f \in L^2 (\mathbb{T})$. Since by \cite[Theorem 2.2]{COWMESE}, it holds $\|e^{-R\mathcal{L}}\|_{L^2(\mathbb{T})\rightarrow L^2(\mathbb{T})} =e^{-(1-\gamma(0))R} \leq 1$, by \cite[Chapter III, MAXIMAL THEOREM]{STEINred}, the heat maximal operator $f\rightarrow H_{\ast}f:=\sup_{R>0}|f\ast h_R|$ is bounded on $L^2 (\mathbb{T})$. Thus, it suffices to prove the boundedness of $(S^z-H)_{\ast}$. Using the Mellin transform for the functions $(1-t/R)^z_+$ and $e^{-tR}$, $R>0$, \cite{BertrandMellin}, and the spectral theorem for $\mathcal{L}$, we have
	\[\
	(S^z_R-H_R)f=\int_{\mathbb{R}}c(z,s)R^{-is}(\mathcal{L})^{is}fds, 
	\]
	where $|c(z,s)|\leq c(z)(1+|s|)^{-(\operatorname{Re}z+1)}$, \cite{GIUMA}, thus the integral above converges. Since $L^2(\mathbb{T})$ is a complete Banach lattice, from \cite{COW}, we can write
	\[\
	(S^z-H)_{\ast}f=\sup_{R>0}|(S_R^z-H_R)f|\leq c(z) \int_{\mathbb{R}} (1+|s|)^{-(\operatorname{Re}z+1)}|\mathcal{L}^{is}f|ds.
	\]
	Thus, 
	\[\
	\|(S^z-H)_{\ast}f\|_{L^2(\mathbb{T})}\leq c(z)\|f\|_{L^2(\mathbb{T})},
	\]
	since we have $\|\mathcal{L}^{is}\|_{L^2(\mathbb{T})\rightarrow L^2(\mathbb{T})}\leq 1$ by the spectral theorem and the fact that $\mathcal{L}$ is self-adjoint on $L^2(\mathbb{T})$ \cite[p.4271]{COWMESE}.
\end{proof}

\begin{lemma}\label{pr}
Let $q>2$. If $f\in L^p(\mathbb{T})$, $p\in [1, q^{\prime}]$ and $z\in \mathbb{C}$ with $\operatorname{Re}z>0$, then $\|S_{\ast}f\|_r\leq c \|f\|_p$, for $r\in [qp^{\prime}/(p^{\prime}-q), \infty]$.
\end{lemma}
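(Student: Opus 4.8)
The plan is to reduce the maximal operator $S_\ast^z$ to a single, $R$-independent convolution and then invoke Young's inequality. The decisive observation is that the kernel bound of Lemma \ref{kernelest} is uniform in $R$: since $|\kappa_R^z(n)|\le cQ^{-n/2}$ for all $R>0$, the radial convolution formula \eqref{convrad} gives, for every $R$,
\[
|S_R^z f(x)|=|(f\ast\kappa_R^z)(x)|\le (|f|\ast|\kappa_R^z|)(x)\le c\,(|f|\ast\Phi)(x),\qquad \Phi(n):=Q^{-n/2}.
\]
Taking the supremum over $R>0$ and noting that the majorant on the right does not depend on $R$, we obtain the pointwise domination $S_\ast^z f(x)\le c\,(|f|\ast\Phi)(x)$ by a single positive convolution operator. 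This collapses the maximal problem into an ordinary mapping property of convolution with the fixed radial function $\Phi$.

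Next I would record that $\Phi\in L^s(\mathbb{T})^{\#}$ precisely when $s>2$: using the sphere volumes,
\[
\|\Phi\|_s^s=\sum_{n\ge0}|S(x_0,n)|\,Q^{-ns/2}=1+(Q+1)Q^{-1}\sum_{n\ge1}Q^{n(1-s/2)},
\]
which converges if and only if $1-s/2<0$. Fixing $q>2$, Young's convolution inequality then yields $\|S_\ast^z f\|_r\le c\,\||f|\ast\Phi\|_r\le c\,\|f\|_p\,\|\Phi\|_s$ whenever $1+\tfrac1r=\tfrac1p+\tfrac1s$ and $s>2$. Choosing $s=q$ forces $\tfrac1r=\tfrac1p+\tfrac1q-1=\tfrac1p-\tfrac1{q'}$, that is $r=qp'/(p'-q)$, and the constraint $\tfrac1r\ge0$ is exactly $p\le q'$, the standing hypothesis. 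To sweep out the whole interval $r\in[qp'/(p'-q),\infty]$ at fixed $p$, I would let $s$ increase from $q$ to $p'$ (all admissible, since $s\ge q>2$ keeps $\Phi\in L^s$, and $p'\ge q$ because $p\le q'$); the value $s=p'$ gives $r=\infty$ and is just Hölder's inequality $\||f|\ast\Phi\|_\infty\le\|f\|_p\|\Phi\|_{p'}$. This establishes the claimed range.

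The only real content here is the uniform-in-$R$ kernel estimate of Lemma \ref{kernelest}; once the maximal operator is dominated by the single convolution $g\mapsto c\,(|g|\ast\Phi)$, everything reduces to a routine application of Young's inequality, and the main point to watch is merely that $\Phi$ fails to lie in $L^2$, so $s$ (and hence $p'$) must stay strictly above $2$. This is also why the argument is confined to $p\le q'<2$ and does not reach $p=2$: at $p=2$ one would need $\Phi\in L^{2}$, which fails, and that endpoint is instead supplied separately by the $L^2$ maximal bound of Lemma \ref{S*L2}. Equivalently, one may phrase the proof as Riesz--Thorin interpolation applied to the positive linear operator $g\mapsto|g|\ast\Phi$ between the three vertex estimates $L^1\to L^\infty$, $L^1\to L^q$ and $L^{q'}\to L^\infty$, whose convex hull is precisely the triangle of pairs $(1/p,1/r)$ with $1\le p\le q'$ and $qp'/(p'-q)\le r\le\infty$; the pointwise domination then transfers these bounds back to $S_\ast^z$.
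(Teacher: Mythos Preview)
Your proof is correct and follows essentially the same route as the paper: both dominate $S_\ast^z f$ pointwise, via the uniform-in-$R$ kernel bound of Lemma~\ref{kernelest}, by the single convolution $|f|\ast\Phi$ with $\Phi(n)=Q^{-n/2}$ (the paper calls this kernel $\kappa_Q$), check that $\Phi\in L^q(\mathbb{T})$ for every $q>2$, and then invoke Young's inequality. Your version is in fact more explicit than the paper's in spelling out how varying the Young exponent $s\in[q,p']$ sweeps the full range $r\in[qp'/(p'-q),\infty]$.
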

\begin{proof} Since the kernel $\kappa_{R}^z$ is radial, we have by (\ref{convrad}) and Lemma \ref{kernelest} that
\begin{align*}
|S_R^zf(x)|&=|(f\ast \kappa_{R}^z)(x)|=|\sum\limits_{n\geq 0}\kappa_{R}^z(n)\sum\limits_{y\in S(x,n)}f(y)|\\
&\leq c\sum\limits_{n\geq 0}Q^{-n/2}\sum\limits_{y\in S(x,n)}|f(y)|\\
&=c(|f|\ast\kappa_Q)(x),
\end{align*}
where $\kappa_Q(n)=Q^{-n/2}$. Thus, 
\begin{equation}\label{maximalS}
|S_\ast^zf(x)|=\sup_{R>0}|(f\ast \kappa_{R}^z)(x)|
\leq c(|f|\ast\kappa_Q)(x),
\end{equation}
where $\kappa_Q(n)=Q^{-n/2}$. Since the kernel $\kappa_Q$ is radial, as in \cite[p.787]{EDDI}, we have
\begin{equation*}
\|\kappa_Q\|^q_{q}=|\kappa_Q(0)|^q+(Q+1)\sum_{n=1}^{\infty}Q^{n-1}|\kappa_R^z(n)|^q.
\end{equation*}
Thus, for every $q>2$, we have
\begin{align}\label{kQest}
\|\kappa_Q\|^q_{q}
&\leq c+c(Q+1)\sum_{n=1}^{\infty}Q^{n-1}Q^{-\frac{n}{2}q} \notag\\
&\leq c+c\frac{Q+1}{Q}\sum_{n=1}^{\infty}Q^{-\frac{n}{2}(q-2)}<c.
\end{align}
From (\ref{maximalS}) and (\ref{kQest}), for fixed $q> 2$ and $\operatorname{Re}z>0$, the operator $ S_\ast^{z}$ maps $
	L^p(\mathbb{T})$, $p\in[1, q^{\prime}]$, continuously to $L^r(\mathbb{T})$, for every $r\in
	[qp^{\prime}/ (p^{\prime}-q),\infty]$. \end{proof}   From Lemmata \ref{S*L2} and \ref{pr} and Riesz-Thorin interpolation, we obtain that for every $r\geq
pq/(2-p+pq-q)$, there is $c\left( z\right) >0$ such that for every $f\in
L^{p}(\mathbb{T})$, $1\leq p \leq 2$, 
\begin{equation*}
\Vert S_{\ast }^{z}f\Vert _{r}\leq c(z)\Vert f\Vert _{p}.
\end{equation*}
\textit{Step 2:  Theorem \ref{Thm} holds for a dense subspace $D$ of $L^p(\mathbb{T})$, $1\leq p \leq 2$.} 

We shall  modify the proof of \cite[Theorem 4]{ALEXOLO}. Set
\[
D=\{ e^{-\frac{1}{t}\mathcal{L}}e^{-s\mathcal{L}}f; \; f\in C_0^{\infty}(\mathbb{T}),\; t\geq 1, \;0<s\leq 1\}, 
\]
and recall that the heat operator is $L^p(\mathbb{T})$ bounded for every $1\leq p\leq 2$, \cite{COWMESE}. Note thus that $\| e^{-s\mathcal{L}}f-f\|_p \rightarrow 0$ as $s\rightarrow 0$ for all $f\in C_0^{\infty}(\mathbb{T})$ and $1\leq p \leq 2$. Also, the multiplier
\[
e^{-\frac{1}{t}(1-\gamma(\lambda))}e^{-s(1-\gamma(\lambda))}-e^{-s(1-\gamma(\lambda))}
\]
yields an $L^p$ bounded convolution operator, for every $1\leq p\leq 2$. It follows from the triangle inequality that
\begin{align*}
&\|e^{-\frac{1}{t}\mathcal{L}}e^{-s\mathcal{L}}f-f\|_p\rightarrow 0, \text{ as }t\rightarrow \infty.
\end{align*}
Thus,  the space $D$ is dense to all $L^p(\mathbb{T})$, $1\leq p\leq 2$.

Let us now fix some $g=e^{-\frac{1}{t}\mathcal{L}}e^{-s\mathcal{L}}f \in D$. Let us also consider a function $\psi \in C^{\infty}(\mathbb{R})$ such that 
\[
\psi(\lambda)=
\begin{cases} 1, \quad \text{for } |\lambda|\leq 1/4 \\
	0, \quad \text{for } |\lambda|\geq 1/2, 
\end{cases}
\]
and put $\psi_R(\lambda)=\psi(\lambda /R)$, $R>0$. Then, for $R$ large enough we have that 
\[
m_R^z(\mathcal{L})g=\psi_R(\mathcal{L})m_R^z(\mathcal{L})e^{-\frac{1}{t}\mathcal{L}}e^{-s\mathcal{L}}f
\] and therefore
\begin{equation}\label{**}
m_R^z(\mathcal{L})g-g= [\psi_R(\mathcal{L})m_R^z(\mathcal{L})-1]e^{-\frac{1}{t}\mathcal{L}}e^{-s\mathcal{L}}f.
\end{equation}
Thus, 
\begin{align*}
|m_R^z(\mathcal{L})g(x)-g(x)|&= \left|[\psi_R(\mathcal{L})m_R^z(\mathcal{L})-1]e^{-\frac{1}{t}\mathcal{L}}e^{-s\mathcal{L}}f(x)\right|\\
&\leq \| [\psi_R(\mathcal{L})m_R^z(\mathcal{L})-1]e^{-\frac{1}{t}\mathcal{L}}h_s(x,\cdot) \|_2 \|f\|_2\\
&\leq \sup_{\lambda>0}[\psi_R(\mathcal{\lambda})m_R^z(\mathcal{\lambda})-1]e^{-\frac{1}{t}(1-\gamma(\lambda))}\|h_s(x, \cdot)\|_2\|f\|_2,
\end{align*}
where the last inequality follows from the spectral theorem. Since 
\[
\sup_{\lambda>0}[\psi_R(\mathcal{\lambda})m_R^z(\mathcal{\lambda})-1]e^{-\frac{1}{t}(1-\gamma(\lambda))} \rightarrow 0, \text{ as } R\rightarrow \infty, 
\]
we obtain 
\[
|m_R^z(\mathcal{L})g(x)-g(x)| \rightarrow 0, \text{ as } R\rightarrow \infty,
\] 

\textit{Step 3: Theorem \ref{Thm} holds on the whole class $L^p(\mathbb{T})$}, $1\leq p \leq 2$.

As in \cite{ALEXOLO}, it suffices to combine Steps 1 and 2, and  well-known measure theoretic arguments, see for example \cite[Theorem 2.1.14]{GRAF}, and the proof of Theorem \ref{Thm} is complete. Note that since the measure on homogeneous trees is discrete, almost everywhere convergence reduces to everywhere convergence.

\end{document}